\documentclass[12pt]{article}
\usepackage{amsmath,amsopn,amssymb,amsthm,amsfonts}

\newtheorem*{theorem*}{Theorem}

\newtheorem*{claim*}{Claim}

\newtheorem*{lemma*}{Lemma}
\newtheorem*{conj*}{Conjecture}
\newtheorem{conj}{Conjecture}
\newtheorem*{coro}{Corollary}

\theoremstyle{definition}
\newtheorem*{example*}{Example}
\newtheorem{defin}{Definition}

\theoremstyle{remark}

\newtheorem*{remark*}{Remark}

\newcommand{\RR}{\mathbb{R}}
\newcommand{\CC}{\mathbb{C}}

\newcommand{\im}{\textrm{im}\,}

\begin{document}
\title{Mirror K\"{a}hler potential on Calabi--Yau $3$-folds
\footnote{This work was supported in part by the Council of the
Russian Federation President Grants (projects NSh-7256.2010.1 and
MK-842.2011.1).}}
\author{Dmitry V. Egorov}
\date{}


\maketitle

\sloppy

\begin{abstract}
The classical K\"{a}hler potential  is a real-valued function (KP)
such that one can determine a K\"{a}hler (symplectic) structure by
differentiating KP.  We define a mirror K\"{a}hler potential on
Calabi--Yau $3$-folds, a real-valued function (MKP) such that one
can determine a complex structure by differentiating MKP.
\newline {\sc Keywords:} special holonomy,
K\"{a}hler potential, Calabi--Yau manifold

\end{abstract}

\section{Introduction}

The K\"{a}hler potential is a real-valued function $\varphi$ such
that  $dd^c\varphi$ is the K\"{a}hler (symplectic) form. We propose
a mirror construction in complex dimension $3$, namely define a
real-valued function $\varphi$ such that $\varphi$ acted by the
certain symplectic differential operator is a $3$-form that entirely
determines a complex structure. The $3$-forms that determine a
complex structure are described by Hitchin in \cite{Hitchin3f,
Hitchin_stable}.

In \cite{egorov3} we propose a new equation analogous to the complex
Monge--Amp\`{e}re equation studied by Calabi and Yau \cite{Calabi,
Yau}. New equation describes deformation of the complex structure,
whereas the classical case describes deformation of the K\"{a}hler
or symplectic one. It turns out that the mirror K\"{a}hler potential
is a solution of the new equation.

In paper \cite{Hull} the generalized Monge--Amp\`{e}re equation and
the K\"{a}hler potential are defined. However, for the case of the
K\"{a}hler geometry they reduce to the usual Monge--Amp\`{e}re
equation and K\"{a}hler potential respectively.

The author would like to thank  D. Alexeevsky, G.~Cavalcanti and
M.~Verbitsky for useful remarks. The author is grateful to
I.\,A.~Taimanov for support.

\section{Background}
First, note that hereafter we use smooth objects only: functions,
manifolds etc.
\subsection{The K\"{a}hler potential}
Let us briefly recall the definition of the K\"{a}hler potential and
the $dd^c$-lemma.

Let $M$ be a K\"{a}hler manifold. The following statement called
$dd^c$-lemma holds on $M$
$$
\ker d\cap \mathrm{im}\,d^c =  \mathrm{im}\,d\cap \ker d^c =
\mathrm{im}\,dd^c.
$$
The $dd^c$-lemma implies existence of the K\"{a}hler potential, a
real-valued function $\varphi$ such that $dd^c = \omega$, where
$\omega$ is a K\"{a}hler structure. If  manifold is compact, then
the K\"{a}hler form can not be exact and potential can not be a
globally defined function. However, it can be still defined locally.

%
%

\subsection{Symplectic Hodge theory}\label{section:symplectic}
In this subsection we define
a symplectic differential operator analogous to $d^c$. Let
$(M,\omega)$ be a symplectic manifold of real dimension $2n$. By
$*_s$ denote a symplectic Hodge star. The action of $*_s$ on the
$k$-forms is uniquely determined by the following formula:
$$
\alpha\wedge *_s\beta =
(\omega^{-1})^k(\alpha,\beta)\frac{\omega^n}{n!}.
$$
By definition, put
\begin{equation}\label{delta} d^s =
(-1)^{k+1}\ast_s d\,\ast_s.
\end{equation}
Note that $d^s$ decreases degree of the form by one and  $dd^s =
-d^sd$.
The following statement called $dd^s$-lemma holds on K\"{a}hler
manifolds.
$$
\ker d \cap \im d^s = \im d \cap  \ker d^s   = \im dd^s.
$$

Let us note that the notation of $d^s$ operator is proposed in this
paper. In literature it has various notations: $\delta$,
$d^\Lambda$, $d^\mathcal{J}$ etc. We use current notation due to its
symmetry with the notation of the $d^c$ operator.

For further information on the symplectic Hodge theory see for
example \cite{Tseng} and references therein.

\subsection{Stable forms}
In this subsection we recall the definition of a stable forms by
Hitchin \cite{Hitchin3f,Hitchin_stable}. Let $V$ be a real
$m$-space. A real form $\rho\in \Lambda^pV^*$ is called stable iff
the orbit of $\rho$ under the natural action of $GL(V)$ is open.
This definition can be ported in the clear way  to the case of forms
on manifolds.

If $p=2$ and $m=2n$, then stability is equivalent to the
non-degeneracy condition of  symplectic form. If $p=3$ and $m=6$,
then  any stable form is a real or imaginary part of some
holomorphic volume form.

Here we list some properties of stable $3$-forms.
\begin{itemize}
\item
Stability can be established by some algebraic criterion on
$3$-form.

\item
Any stable {\it real} $3$-form completely determines some almost
complex structure on $V$.

\item For any stable form $\rho$ there exists a dual stable forms
$\hat{\rho}$ such that $\rho+i\hat{\rho}$ is a
$SL(3,\mathbb{C})$-invariant form. If $\rho+i\hat{\rho}$ is closed,
then almost complex structure determined by $\rho$ is integrable and
$\rho+i\hat{\rho}$ is a holomorphic volume form with respect to the
complex structure determined by $\rho$.

\item For any stable form $\rho$ there exists a frame $\{e^i\}$
of $V^*$ such that
\begin{equation}\label{rho}
\rho = e^{135} - e^{245} - e^{146} - e^{236},\quad \hat{\rho}=
e^{235} + e^{145} + e^{136} - e^{246},
\end{equation}
where $e^{ijk} = e^i\wedge e^j\wedge e^k$.
\end{itemize}


\section{The mirror K\"{a}hler potential}
\subsection{Definition} \label{section:def}

Let $M$ be a compact Calabi--Yau $3$-fold with K\"{a}hler form
$\omega$ and holomorphic volume form $\Omega =
\rho+\sqrt{-1}\sigma$. By Darboux' theorem, for any point  $p\in M$
there exists a local co-ordinate chart $\{x^i\}$ in the neighborhood
of $p$ such that locally
\begin{equation}\label{local_omega}
\omega = dx^{12}+dx^{34}+dx^{56},
\end{equation}
where $dx^{ij} =
dx^i\wedge dx^j$. Put
\begin{equation}\label{local_rho1}
\rho_0 = dx^{135} - dx^{245} - dx^{146} - dx^{236},\quad \sigma_0 =
dx^{235} + dx^{145} + dx^{136} - dx^{246},
\end{equation}
or
\begin{equation}\label{local_rho2}
\Omega_0 = \rho_0 + \sqrt{-1}\sigma_0 =
(dx^1+\sqrt{-1}dx^2)\wedge(dx^3+\sqrt{-1}dx^4)\wedge(dx^5+\sqrt{-1}dx^6).
\end{equation}
\begin{defin}
We say that a locally defined real-valued function $\varphi$ is a
{\it local mirror K\"{a}hler potential} iff
$$
\rho = dd^s\varphi\sigma_0,\quad   \sigma = -dd^s\varphi\rho_0
$$
or
$$
\Omega = -\sqrt{-1}dd^s\varphi\:\Omega_0,
$$
where $\Omega_0 = \rho_0+\sqrt{-1}\sigma_0$.
\end{defin}
Since stable forms
completely determine complex structure, one can see that the mirror
K\"{a}hler potential determines complex structure whenever the
symplectic one is given.

As in the classical case one can not define the K\"{a}hler potential
globally unless the holomorphic volume form is exact. Nevertheless,
we give the following defintion.
\begin{defin}
We say that a global function on $M$ is a {\it global mirror
K\"{a}hler potential} iff for $\Omega'\sim\Omega$
$$
\Omega' = \Omega - \sqrt{-1}dd^s\varphi\:\Omega.
$$
\end{defin}

\begin{example*}Suppose $\CC^3$ with a flat Hermitian
metric $\sum dz^i\otimes d\bar{z}^i$; the K\"{a}hler form $\omega =
(\sqrt{-1}/2)\sum dz^i\wedge d\bar{z}^i$ and holomorphic volume form
$\Omega = dz^1\wedge dz^2\wedge dz^3$. Then the following identities
hold:
$$
\omega = dd^c\varphi, \quad \Omega = -
(\sqrt{-1}/3)dd^s\varphi\:\Omega,
$$
where $\varphi = \sum|z^i|^2$.
\end{example*}

\subsection{Does the mirror K\"{a}hler potential exist?}

The question about existence of the local or global mirror
K\"{a}hler potential is open. The possible line of attack could be
using the classical continuity method. Consider for example $\RR^6$
with the  standard Euclidean symplectic structure $\omega_0$ and
compatible complex structure given by $\Omega =
\rho+\sqrt{-1}\sigma$. We have seen in the example above that there
exists a mirror K\"{a}hler potential whenever there is a flat
metric, i.e., $\rho=\rho_0$; $\sigma = \sigma_0$.

Now consider a parametrized system of equations:
\begin{equation}\label{para}
dd^s\varphi_t\rho_0 = -(1-t)\sigma_0 - t\sigma; \quad
dd^s\varphi_t\sigma_0 = (1-t)\rho_0 + t\rho;\quad  t\in[0,1].
\end{equation}
Since the set of almost complex structures compatible with the given
symplectic form is convex, RHS  describes the continuous family of
stable $3$-forms.

Then we can use the continuity method. Let $\mathcal{T}$ be a subset
of $[0,1]$ such that $t\in\mathcal{T}$, whenever parametrized
equation has solution for that $t$. Since $0\in\mathcal{T}$, it is
not empty. Now one has to establish some estimates on the equation
\eqref{para} and prove that $\mathcal{T}$ is open and closed in
$[0,1]$.

We conjecture that (at least) for compact Calabi--Yau $3$-folds
there exist local and global mirror K\"{a}hler potentials.

\begin{conj}Let $M$ be a compact Calabi--Yau $3$-fold with K\"{a}hler form
$\omega$ and holomorphic volume form $\Omega=\rho+\sqrt{-1}\sigma$.
Then for any point $p\in M$ there locally exists a co-ordinate chart
$\{x^i\}$ such that $\omega$ takes form \eqref{local_omega} and a
real-valued function $\varphi$ such that
$$\Omega = -\sqrt{-1}dd^s\varphi\:\Omega_0,
$$
where $\Omega_0$ is defined by the formula \eqref{local_rho2} with
respect to the local chart.

\end{conj}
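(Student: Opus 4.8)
This statement is posed as a conjecture, so what follows is a strategy rather than a finished argument. The plan is to run the classical continuity method along the family \eqref{para}, sharpening the sketch that accompanies it. The first point to make precise is the list of unknowns: the conjecture asks \emph{both} for a Darboux chart (a germ of symplectomorphism carrying the standard form to $\omega$) \emph{and} for a function $\varphi$, and the reference form $\Omega_0$ of \eqref{local_rho2} depends on that chart. I would therefore treat the pair (chart, $\varphi$) as the unknown, start from the flat model of the Example---a suitable multiple of $\sum|z^i|^2$ solves the $t=0$ problem---and, since on a coordinate ball the cohomological obstruction (a holomorphic volume form cannot be exact) is invisible, work with the inverse function theorem in H\"older spaces $C^{k,\alpha}$ rather than the full compactness machinery needed for the global version.

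Set $F(\varphi) = -\sqrt{-1}\,dd^s(\varphi\,\Omega_0)$; because $dd^s=-d^sd$ this is a second-order operator that is \emph{linear} in $\varphi$, and a direct expansion shows that the real and imaginary parts of $F(\varphi_t) = (1-t)\Omega_0 + t\,\Omega$ are precisely the two equations of \eqref{para}. The subtlety is that $F$ sends a single scalar to a $3$-form, so for a \emph{fixed} chart the system is heavily over-determined and a generic stable target of \eqref{para} will not lie in the tiny image of $F$. The resolution---and the origin of the genuine nonlinearity---is the infinite-dimensional freedom in the choice of Darboux chart: varying the chart moves $\Omega_0$, and it is this coupling of chart and potential that must supply the missing directions. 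Let $\mathcal{T}\subset[0,1]$ be the set of $t$ for which a pair (chart, $\varphi_t$) exists; then $0\in\mathcal{T}$, and it remains to prove $\mathcal{T}$ open and closed.

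For openness I would linearize $F$ jointly in $\varphi$ and in the symplectic vector field generating the deformation of the chart, obtaining a second-order operator whose $\varphi$-part is $-\sqrt{-1}\,dd^s(\,\cdot\,\Omega_0)$. The task is to show this combined operator is elliptic and surjective onto the tangent space to the variety of stable $3$-forms at $F(\varphi_t)$. The symplectic $dd^s$-lemma recalled in Subsection \ref{section:symplectic}, valid on Calabi--Yau manifolds, is the natural tool here: it identifies the image of $dd^s$ with the exact forms and lets one invert the linearization modulo finite-dimensional cohomological pieces. Verifying that the chart directions exactly fill the cokernel of the scalar operator is the delicate part of this step, after which the inverse function theorem gives openness.

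Closedness requires a priori estimates for $\varphi_t$ (and the chart) uniform in $t$---a $C^0$ bound, then a second-order bound, then Schauder bootstrapping---and this is where I expect the real difficulty. Although $F$ is linear in $\varphi$, the requirement that the target remain a \emph{stable}, hence decomposable, $3$-form---equivalently that it define an integrable almost complex structure through Hitchin's duality $\rho\mapsto\hat\rho$ of \eqref{rho}---is a determinant-type algebraic constraint, and it is exactly this constraint that makes the problem a nonlinear one of complex Monge--Amp\`{e}re type, as in \cite{Calabi,Yau}. The plan would be to project \eqref{para} onto Hitchin's invariant of the stable form, reducing it to a scalar fully nonlinear elliptic equation, and then to bound the Hessian of $\varphi_t$ using the positivity encoded in stability. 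Establishing this uniform second-order estimate is, I expect, the principal obstacle to converting the continuity scheme into a theorem.
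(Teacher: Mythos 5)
The statement you are addressing is explicitly left open in the paper: it is stated as a conjecture, and the paper offers no proof, only the same continuity-method sketch you reproduce (the family \eqref{para}, the non-emptiness of $\mathcal{T}$ via the flat example, and the unproved openness and closedness steps). Your proposal therefore cannot be checked against a proof in the paper; it is a strategy for an open problem, and as a strategy it follows essentially the route the author himself suggests.

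Where you go beyond the paper is in flagging the over-determinacy: for a fixed Darboux chart, $\varphi\mapsto dd^s(\varphi\,\Omega_0)$ maps one scalar function into the $20$-dimensional bundle of $3$-forms, so the equation $\Omega=-\sqrt{-1}\,dd^s\varphi\,\Omega_0$ is a large system for a single unknown. This is a genuine issue that the paper's sketch passes over silently, and raising it is the most valuable part of your proposal. However, your proposed resolution does not obviously work: local symplectomorphisms near the identity are generated by a single Hamiltonian function, so the pair (chart, $\varphi$) still contributes only two scalar degrees of freedom against a fibre of stable forms compatible with $\omega$ that is $12$-dimensional pointwise. You would need to argue either that the admissible right-hand sides of \eqref{para} (closed, primitive, stable, with prescribed Hitchin dual) form a set small enough to be swept out by these two functions, or that additional gauge freedom is available; neither is established, and without it the openness step of the continuity method cannot even be formulated as an elliptic problem. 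The closedness step, as you acknowledge, likewise rests on a priori estimates that nobody has. In short: your proposal is a reasonable elaboration of the paper's own programme, it identifies a real obstruction the paper ignores, but it is not a proof and the conjecture remains open.
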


\begin{conj}\label{conj1}Let $M$ be a compact Calabi--Yau $3$-fold with K\"{a}hler form
$\omega$ and holomorphic volume form $\Omega=\rho+\sqrt{-1}\sigma$.
Then for $\Omega'\sim\Omega$ there exists a global real-valued
function $\varphi$ such that
$$
\Omega' = \Omega - \sqrt{-1}dd^s\varphi\:\Omega.
$$
\end{conj}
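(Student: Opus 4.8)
The plan is to establish Conjecture~\ref{conj1} by the continuity method sketched in the paper, applied to the family \eqref{para} of stable $3$-form equations interpolating between the flat data $(\rho_0,\sigma_0)$ and the target $(\rho,\sigma)$. Since the set of almost complex structures compatible with the fixed symplectic form $\omega$ is convex, the right-hand sides of \eqref{para} trace out a continuous path of \emph{stable} $3$-forms, so at each $t$ the equation is genuinely an equation for a complex structure. First I would set $\mathcal{T}=\{t\in[0,1] : \text{\eqref{para} admits a solution }\varphi_t\}$, observe $0\in\mathcal{T}$ from the flat model, and then argue that $\mathcal{T}$ is both open and closed. Because $[0,1]$ is connected, this would force $\mathcal{T}=[0,1]$ and in particular $1\in\mathcal{T}$, yielding the desired $\varphi$.

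For openness I would linearize the nonlinear operator $\varphi\mapsto dd^s\varphi\,\rho_0$ (and its $\sigma_0$ counterpart) about a solution $\varphi_{t_0}$ and show the linearization is an elliptic operator whose cokernel is controlled. The key point is that $dd^s$ is, up to symplectic Hodge duality, essentially a second-order operator of Laplace type on functions, so after identifying the tangent space to the space of stable forms (using Hitchin's description and the duality $\rho\mapsto\hat\rho$) one should obtain a second-order elliptic equation $L_{t_0}\psi = \dot{f}_{t_0}$ for the infinitesimal deformation $\psi=\dot\varphi$. Surjectivity of $L_{t_0}$ modulo the cohomological obstruction, via the implicit function theorem in suitable Hölder or Sobolev spaces, then gives solutions for $t$ near $t_0$, establishing openness.

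For closedness I would need a priori estimates: given a sequence $t_n\to t_\infty$ with solutions $\varphi_{t_n}$, uniform $C^{k,\alpha}$ bounds would let me extract a convergent subsequence whose limit solves \eqref{para} at $t_\infty$. This is precisely where the analysis is most delicate: one must derive $C^0$, then gradient, then higher-order estimates for $dd^s\varphi_t$, presumably from the integrated form of the equation together with the Calabi--Yau structure equations $d\omega=0$, $d\Omega=0$. Because the equation is a genuinely \emph{non-Kähler} analogue of the complex Monge--Amp\`{e}re equation (it deforms the complex, not the symplectic, structure), the maximum-principle and integral techniques that Yau used in \cite{Yau} do not transfer verbatim, and one must instead exploit the symplectic Hodge theory of Section~\ref{section:symplectic}, notably the $dd^s$-lemma, to pin down $\varphi_t$ up to $\ker dd^s$ and thereby close the estimates.

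The main obstacle, as I see it, is exactly this closedness step: obtaining the $C^2$ a priori estimate that controls the full Hessian of $\varphi_t$ uniformly in $t$. In the classical Calabi--Yau setting this estimate is the technical heart of the proof, and here the lack of a positivity structure (the target is a complex structure rather than a metric) means there is no obvious convexity or plurisubharmonicity to exploit. I expect one would need to reformulate \eqref{para} as a fully nonlinear elliptic PDE in $\varphi_t$ via Hitchin's algebraic stability criterion for $3$-forms, verify uniform ellipticity along the continuity path using the convexity of compatible complex structures, and only then attempt Evans--Krylov-type regularity. Establishing that uniform ellipticity does not degenerate as $t\to 1$ is the crux on which the whole argument rests.
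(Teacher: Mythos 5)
The statement you are trying to prove is stated in the paper as a \emph{conjecture}: the author explicitly says that the existence of the local or global mirror K\"{a}hler potential is an open question, and the continuity method is offered only as a ``possible line of attack.'' Your proposal reproduces that same sketch and fleshes it out with standard continuity-method boilerplate, but it does not close any of the steps that the paper itself leaves open; as written it is a research plan, not a proof. You are candid about this in your final paragraph, but the result is that every load-bearing assertion (openness, closedness, ellipticity) remains unestablished.

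Beyond the admitted absence of a priori estimates, there are concrete gaps in the setup itself. First, the path \eqref{para} is built from the flat model forms $\rho_0,\sigma_0$, which are defined only in a Darboux chart; it is adapted to the \emph{local} conjecture, whereas Conjecture~\ref{conj1} is the \emph{global} statement on a compact $M$ with $\Omega'\sim\Omega$, and you give no mechanism for passing from chartwise solutions to a single global $\varphi$ (this is exactly where the $dd^s$-lemma would have to enter, and you do not use it). Second, \eqref{para} is an overdetermined system: two $3$-form equations (forty scalar equations in six real dimensions) for a single unknown function $\varphi_t$; convexity of the space of compatible almost complex structures guarantees that the right-hand side is a path of stable forms, but not that this path lies in the image of the map $\varphi\mapsto(dd^s\varphi\sigma_0,\,-dd^s\varphi\rho_0)$, so even the formal solvability of the linearized problem is unclear. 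Third, the claim that the linearization is elliptic is not verified; the explicit local form \eqref{new_explicit} shows the operator is fully nonlinear of second order, and its ellipticity at a given $\varphi$ requires a positivity condition of the type in Definition~\ref{positive} (strict special Lagrangian plurisubharmonicity), which you would need to show is propagated along the continuity path --- precisely the degeneration issue you flag at $t\to 1$ but do not resolve. In short, the proposal matches the paper's intended strategy but proves nothing the paper does not already assert, and the statement remains open.
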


\subsection{Functional class}Let us recall the definition of the
plurisubharmonic functions. Assume $\CC^3$ with a symplectic form
$\omega$. Let $\mathcal{C}(\omega)$ be a convex conical set of
alternating $2$-vectors such that for any
$\xi\in\mathcal{C}(\omega)$
$$\omega(\xi)>0.$$
Recall that a real-valued (smooth) function $\varphi$ is called
plurisubharmonic iff for any $\xi\in\mathcal{C}(\omega)$
$$
(dd^c\varphi)(\xi) \geq 0.
$$
Function $\varphi$ is called pluriharmonic iff $\varphi$ and
$-\varphi$ are plurisubharmonic functions.

Now let us state analogous definition concerning special Lagrangian
submanifolds. Let $\rho+\sqrt{-1}\sigma$ be a holomorphic volume
form on $\CC^3$; then  define sets of $3$-vectors
$\mathcal{C}(\rho)$ and $\mathcal{C}(\sigma)$ as above.

\begin{defin}\label{positive} We call a stable $3$-form $\tau$ positive (strictly
positive) modulo stable form $\rho$ and denote by $\tau \geq 0$
($\tau
> 0$) $\mod \rho$ iff for any $\xi\in\mathcal{C}(\rho)$
$$
\tau(\xi) \geq 0 \quad (\tau(\xi) > 0).
$$
We call a stable $3$-form $\tau$ negative (strictly negative) modulo
stable form $\rho$ and denote by $\tau \leq 0$ ($\tau < 0$) $\mod
\rho$ iff $-\tau \geq 0$ ($-\tau > 0$) $\mod \rho$.
\end{defin}

\begin{defin}\label{psh} A real-valued function $\varphi$ is called {\it special
Lagrangian plurisubharmonic} iff for any $\xi\in\mathcal{C}(\rho)$
and $\eta\in\mathcal{C}(\sigma)$
\begin{equation}\label{psh_ineq}
(dd^s\varphi\sigma)(\xi) \geq 0; \quad (dd^s\varphi\rho)(\eta) \leq
0,
\end{equation}
or equivalently
\begin{equation}\label{psh_ineq2}
dd^s\varphi\sigma\geq 0 \mod \rho; \quad dd^s\varphi\rho\leq 0 \mod
\sigma.
\end{equation}
\end{defin}
\begin{defin}
A real-valued function $\varphi$ is called {\it strictly special
Lagrangian plurisubharmonic} iff inequalities \eqref{psh_ineq} or
equivalently \eqref{psh_ineq2} are strict.
\end{defin}
\begin{remark*}
This definition is closely related with the one given in \cite{HL}.
In fact, these two definitions determine the same class of
functions. However, they use a differential operator $d^\phi$
related with the calibration $\phi$. The $d^\phi$ is first defined
in \cite{Verbit} for the case of $G_2$-manifolds.
\end{remark*}

%

%
%
%

\begin{claim*}
Any special Lagrangian plurisubharmonic function is
plurisubharmonic.
\end{claim*}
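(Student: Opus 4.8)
The statement is pointwise in the $2$-jet of $\varphi$, so the plan is to fix $p\in\CC^3$, pass to the Darboux frame of \eqref{local_omega}--\eqref{local_rho2}, and reduce everything to linear algebra on the Hessian $(\varphi_{ab})=(\partial^2\varphi/\partial x^a\partial x^b)$. In these coordinates plurisubharmonicity is equivalent to positive semidefiniteness of the complex Hessian $H=(\partial^2\varphi/\partial z^i\partial\bar z^j)$: the condition $(dd^c\varphi)(\xi)\ge 0$ for all $\xi\in\mathcal C(\omega)$ says exactly $H\succeq 0$, and the projector $P_W$ of a complex line $W$ is $U(1)$-invariant, hence of type $(1,1)$, so it pairs only with $H$. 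The goal is therefore to show that the two sign conditions \eqref{psh_ineq} cut out a set of Hessians contained in $\{H\succeq 0\}$.

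First I would make the operators explicit. Since $\rho_0$ and $\sigma_0$ are $\omega$-primitive of middle degree, the symplectic star acts on them by a sign, $\ast_s\rho_0=\epsilon\rho_0$ and $\ast_s\sigma_0=\epsilon\sigma_0$ with a common $\epsilon=\pm1$; with \eqref{delta} this collapses $d^s(\varphi\sigma_0)$ to $-\epsilon\,\ast_s(d\varphi\wedge\sigma_0)$, and a single application of $\ast_s$ on $4$-forms then expresses the $3$-form $dd^s(\varphi\sigma_0)$ (and likewise $dd^s(\varphi\rho_0)$) with coefficients linear in $(\varphi_{ab})$. A short computation shows that on a $\rho_0$-calibrated special Lagrangian $3$-plane $L$ the value $(dd^s\varphi\sigma_0)(L)$ equals, up to a positive constant, the induced Laplacian $\Delta_L\varphi$, and symmetrically for $(dd^s\varphi\rho_0)$ on the $\sigma_0$-calibrated planes; this recovers the flat Example and identifies Definition \ref{psh} with the Harvey--Lawson class referred to in the Remark.

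The core step is the convex-geometric containment. Using $SU(3)$-equivariance of the whole configuration $(\omega,\rho_0,\sigma_0)$ I would reduce to checking $H\succeq 0$ in a single direction, say $\langle e_1,He_1\rangle=\tfrac14(\varphi_{11}+\varphi_{22})\ge 0$, and then exhibit the functional $\xi\mapsto(dd^c\varphi)(\xi)$ at the complex line $\CC e_1$ as a nonnegative combination (integral) of the functionals occurring in \eqref{psh_ineq}. Here it is essential that both families enter with their opposite signs---$dd^s\varphi\sigma_0\ge 0\bmod\rho$ together with $dd^s\varphi\rho_0\le 0\bmod\sigma$---since the two calibrations of complementary phase are what allow the unwanted $(2,0)+(0,2)$ and off-diagonal $(1,1)$ contributions of the individual conditions to cancel, leaving the positive complex-line term.

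The main obstacle is precisely this last identification, and it hinges on reading the cones $\mathcal C(\rho),\mathcal C(\sigma)$ correctly. Restricting the inequalities to calibrated Lagrangian $3$-planes alone is not enough: for a Lagrangian $L$ and any complex line $W$ one has the rigidity $\langle P_L,P_W\rangle=1$, so the induced-Laplacian conditions control only the combinations $\min(\varphi_{2i-1,2i-1},\varphi_{2i,2i})$ and do not force $\varphi_{11}+\varphi_{22}\ge 0$. Thus the non-calibrated (non-Lagrangian) elements of $\mathcal C(\rho),\mathcal C(\sigma)$, on which $dd^s\varphi\sigma_0$ and $dd^s\varphi\rho_0$ are genuinely not the Laplacian, must be used, and the crux is to show that the complex-line projector lies in the closed convex cone dual to these conditions---equivalently, to establish the Dirichlet-duality inequality between the K\"ahler and special Lagrangian subequations in the sense of Harvey--Lawson \cite{HL}.
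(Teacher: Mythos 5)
Your proposal takes a genuinely different route from the paper's, but it is not a complete proof: the decisive step is announced as a goal rather than carried out. You correctly reduce the claim to a pointwise statement about $2$-jets, and you identify exactly what must be shown --- that the functional $\xi\mapsto(dd^c\varphi)(\xi)$ at a complex line lies in the closed convex cone generated by the functionals appearing in \eqref{psh_ineq}. You even observe (correctly, via the rigidity $\langle P_L,P_W\rangle=1$ for Lagrangian $L$ and complex $W$) that the calibrated directions alone cannot suffice, so the non-calibrated elements of $\mathcal{C}(\rho)$ and $\mathcal{C}(\sigma)$ must carry the argument. But you then label this containment ``the main obstacle'' and ``the crux'' without exhibiting the required nonnegative combination or proving the Dirichlet-duality inequality. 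Since that dual-cone containment \emph{is} the claim, restated in dual form, nothing has actually been established. The preparatory steps (the sign of $\ast_s$ on the primitive middle-degree forms $\rho_0,\sigma_0$, the identification of $(dd^s\varphi\sigma_0)(L)$ with an induced Laplacian on calibrated planes, the $SU(3)$-reduction to a single direction) are plausible but likewise only sketched.

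For comparison, the paper sidesteps the pointwise cone computation entirely: it asserts that the special Lagrangian plurisubharmonic functions form a class containing the affine functions, invariant under $Sp(3,\CC)$, satisfying the weak maximum principle, and maximal with these properties, and then invokes H\"{o}rmander's characterization \cite[Theorem 5.1.7]{Hormander_convex} of such a class as the plurisubharmonic functions. That argument operates at the level of function classes and the maximum principle rather than at the level of Hessians, which is precisely why it never needs the dual-cone containment on which your sketch stalls. To complete your linear-algebra route you would have to actually produce a positive measure on $\mathcal{C}(\rho)\times\mathcal{C}(\sigma)$ representing the complex-line evaluation; alternatively, verify the weak maximum principle for your class directly and follow the paper's reduction.
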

\begin{proof}
By H\"{o}rmander's definition of $G$-subharmonic functions, where
$G$ is a linear group \cite[Definition 5.1.1]{Hormander_convex}
special Lagrangian plurisubharmonic functions are
$Sp(3,\CC)$-subharmonic. Namely, the set $\mathcal{S}$ of special
Lagrangian plurisubharmonic functions satisfies the following
conditions.
\begin{itemize}
\item $\mathcal{S}$ contains every affine function;
\item $\mathcal{S}$ is invariant under $Sp(3,\CC)$;
\item the weak maximum principle is valid for $\mathcal{S}$;
\item $\mathcal{S}$ is maximal with the preceding properties.

\end{itemize}
By \cite[Theorem 5.1.7]{Hormander_convex}, the set of
$Sp(3,\CC)$-subharmonic functions is a set of plurisubharmonic
functions.

\end{proof}

\begin{coro}
The mirror K\"{a}hler potential is a plurisubharmonic function.
\end{coro}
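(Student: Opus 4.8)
The plan is to deduce the corollary from the preceding Claim: it suffices to show that the mirror K\"ahler potential $\varphi$ is special Lagrangian plurisubharmonic in the sense of Definition \ref{psh}, since the Claim then immediately yields that $\varphi$ is plurisubharmonic. Accordingly, I would first reduce the corollary to a pair of positivity statements on stable $3$-forms, and then isolate the verification of those statements as the substantive part of the argument.

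First I would work in a Darboux chart around a fixed point, where $\omega = \omega_0$ takes the form \eqref{local_omega} and the flat model data $\rho_0,\sigma_0$ are given by \eqref{local_rho1}; I take $\rho_0 + \sqrt{-1}\sigma_0$ as the reference holomorphic volume form for the special Lagrangian plurisubharmonic condition. By the defining equations of the mirror K\"ahler potential one has $dd^s\varphi\,\sigma_0 = \rho$ and $dd^s\varphi\,\rho_0 = -\sigma$, where $\rho + \sqrt{-1}\sigma$ is the holomorphic volume form determined by $\varphi$. Substituting these into the inequalities of Definition \ref{psh}, written for the reference pair $\rho_0,\sigma_0$, converts them into the two positivity statements $\rho \geq 0 \mod \rho_0$ and $\sigma \geq 0 \mod \sigma_0$; that is, $\rho(\xi)\geq 0$ for every $\xi\in\mathcal{C}(\rho_0)$ and $\sigma(\eta)\geq 0$ for every $\eta\in\mathcal{C}(\sigma_0)$, in the sense of Definition \ref{positive}.

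The main step, and the principal obstacle, is to verify these two positivity statements. Both $\rho,\sigma$ and $\rho_0,\sigma_0$ are the real and imaginary parts of holomorphic volume forms for almost complex structures $J$ and $J_0$ compatible with the same symplectic form $\omega_0$, and each is a special Lagrangian calibration of unit comass. The cone $\mathcal{C}(\rho_0)$ is generated by the $3$-planes calibrated by $\rho_0$, and on such a plane $L$ one has $\rho|_L = \cos\theta\,(\rho_0|_L)$ for an angle $\theta$ measuring the discrepancy between the two calibrations. I would use the convexity of the space of $\omega_0$-compatible complex structures, the same convexity invoked for the family \eqref{para}, to connect $J$ to $J_0$ through compatible structures and control the phase, establishing $|\theta|\leq \pi/2$ and hence $\rho|_L \geq 0$; the argument for $\sigma$ on $\mathcal{C}(\sigma_0)$ is identical. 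This comass/phase estimate, uniform over the generating cone, is where the real work lies. Once it is in place, $\varphi$ is special Lagrangian plurisubharmonic by Definition \ref{psh}, and the Claim gives that $\varphi$ is plurisubharmonic, completing the proof.
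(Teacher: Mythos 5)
Your first step---reduce to the preceding Claim by showing that the mirror K\"ahler potential is special Lagrangian plurisubharmonic---is exactly the route the paper intends: the Corollary carries no separate proof, so the paper is implicitly taking for granted precisely the fact you isolate, namely that the defining identities $\rho = dd^s\varphi\,\sigma_0$, $\sigma = -dd^s\varphi\,\rho_0$ place $\varphi$ in the class of Definition~\ref{psh}. Your translation of this into the two positivity statements $\rho\geq 0 \bmod \rho_0$ and $\sigma\geq 0 \bmod \sigma_0$ is the right bookkeeping, modulo an ambiguity you should flag: Definition~\ref{psh} is phrased with the cones $\mathcal{C}(\rho)$, $\mathcal{C}(\sigma)$ of the given holomorphic volume form, while the mirror potential equations involve the flat pair $(\rho_0,\sigma_0)$ of \eqref{local_rho1}; the two readings give different conditions and the paper does not say which is meant.

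The genuine gap is in your main step. The phase bound $|\theta|\leq\pi/2$ is asserted, not proved, and the tool you invoke---convexity of the set of almost complex structures compatible with $\omega_0$, as in \eqref{para}---does not deliver it: convexity joins $J_0$ to $J$ by a path of compatible structures, but gives no control on how the phase of a fixed Lagrangian $3$-plane evolves along that path. Worse, the cone $\mathcal{C}(\rho_0)$ is only constrained to consist of $3$-vectors on which $\rho_0$ is positive, so it may contain planes of $\Omega_0$-phase arbitrarily close to $\pm\pi/2$; for any $J\neq J_0$ the $\Omega$-phase of such a plane will in general cross $\pi/2$, giving $\rho(\xi)<0$ for some $\xi\in\mathcal{C}(\rho_0)$. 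So the positivity you need cannot follow from a soft deformation argument; it requires either restricting $\mathcal{C}(\rho_0)$ to a much smaller cone (say, the one generated by the $\rho_0$-calibrated planes) plus a quantitative comass/phase estimate relating $\Omega$ to $\Omega_0$, or some positivity hypothesis on $\varphi$ that is nowhere stated. To be fair, the paper itself supplies no such argument---the Corollary is presented as immediate from the Claim---but as written your proof does not close, and the step you yourself identify as ``where the real work lies'' remains open.
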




\section{New equation }

In paper \cite{egorov3} we propose a new equation on the
$3$-dimensional Calabi--Yau metrics and prove the following solution
existence theorem.

\begin{theorem*}
Let $(M,\omega)$ be a compact K\"{a}hler $3$-manifold such that
$c_1(M)=0$; let $\Omega = \rho+i\sigma$ be a holomorphic volume form
on $M$. Then  the following equation on the unknown real $3$-forms
$\alpha$ and $\beta$
\begin{equation}\label{new1}
(\rho+dd^s\alpha)\wedge(\sigma +dd^s\beta) = e^F\rho\wedge\sigma
\end{equation}
or equivalently
\begin{equation}\label{new2}(\Omega + dd^s\psi)\wedge (\bar{\Omega} + dd^s\bar{\psi}) =
e^F\Omega\wedge\bar{\Omega},\quad \psi = \alpha+\sqrt{-1}\beta
\end{equation}
has solution provided that
\begin{enumerate}
\item $\rho+dd^s\alpha$ and $\sigma +dd^s\beta$ are primitive stable forms;
\item $\sigma +dd^s\beta$ is dual to $\rho+dd^s\alpha$ in the sense of the stable forms;
\item real function $F$ is normalized: $\int_M{e^F\rho\wedge\sigma} = \int_M
\rho\wedge\sigma$.
\end{enumerate}
\end{theorem*}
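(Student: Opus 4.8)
The plan is to reduce the nonlinear system \eqref{new1} to a scalar complex Monge--Amp\`ere problem and then invoke Yau's solution of the Calabi conjecture.
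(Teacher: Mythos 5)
Your proposal is a headline, not a proof: the entire mathematical content of the theorem lies in the reduction you announce, and that reduction is nowhere carried out. Note also that the present paper does not prove this theorem at all --- it is imported verbatim from \cite{egorov3} --- so there is no in-paper argument to match your outline against; it must stand on its own, and as written it cannot.

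The concrete obstacles are these. First, the unknowns in \eqref{new1} are a \emph{pair of real $3$-forms} $\alpha,\beta$ (equivalently a complex $3$-form $\psi$), not a scalar function; collapsing them to a single real-valued unknown is precisely the mirror K\"{a}hler potential ansatz $\alpha=\varphi\sigma$, $\beta=-\varphi\rho$ leading to \eqref{newnew1}, and the existence of such a potential is exactly what this paper leaves as an open conjecture --- you cannot assume it as an ingredient of the proof. Second, even granting a scalar reduction, the resulting local equation is \eqref{new_explicit}, a fully nonlinear operator in the real Hessian $\varphi_{ij}=\partial^2\varphi/\partial x^i\partial x^j$ in Darboux coordinates; it is \emph{not} the complex Monge--Amp\`{e}re equation $\det\varphi_{i\bar j}=\mathrm{const}$ (in the semi-flat case it degenerates to a sum of three real Monge--Amp\`{e}re operators, \eqref{new_explicit_semi}), so Yau's theorem \cite{Yau} does not apply to it directly. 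Third, Yau's theorem deforms the K\"{a}hler form within a fixed complex structure, whereas \eqref{new1} deforms the complex structure (through Hitchin's stable $3$-forms) with the symplectic structure fixed; these problems are mirror to one another, and you supply no mirror map transporting a solution of one into a solution of the other. Finally, the side conditions (1) and (2) --- primitivity, stability, and duality of $\rho+dd^s\alpha$ and $\sigma+dd^s\beta$ --- are open nonlinear constraints with no counterpart in the Calabi conjecture, and your outline does not address how they are to be enforced. To make this a proof you must exhibit the reduction explicitly and verify that the reduced problem is genuinely one covered by Yau's a priori estimates.
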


If the global mirror K\"{a}hler potential $\varphi$ exists, then
equations \eqref{new1} and \eqref{new2} take form
\begin{equation}\label{newnew1}
(\rho+dd^s\varphi\sigma)\wedge(\sigma -dd^s\varphi\rho) =
e^F\rho\wedge\sigma
\end{equation}
and
\begin{equation}\label{newnew2}
(\Omega - \sqrt{-1}dd^s\varphi\Omega)\wedge (\bar{\Omega} +
\sqrt{-1}dd^s\varphi\bar{\Omega}) = e^F\Omega\wedge\bar{\Omega}
\end{equation}
respectively.

\begin{conj}
Let $M$ be a compact Calabi--Yau $3$-fold with K\"{a}hler form
$\omega$ and holomorphic volume form $\Omega=\rho+\sqrt{-1}\sigma$.
Then equation \eqref{newnew1} or equivalently \eqref{newnew2} has
unique solution: a real-valued function $\varphi$ provided that
\begin{enumerate}
\item $\rho+dd^s\varphi\sigma > 0 \mod \rho$ and $\sigma - dd^s\varphi\rho > 0 \mod \sigma$,
where positivity is in the sense of the Definition \ref{positive};
\item $\int_M{e^F}\omega^n = \int_M{\omega^n}$;
\item $\int_M{\varphi}\,\omega^n = 0$.
\end{enumerate}

\end{conj}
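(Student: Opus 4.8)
The plan is to solve the fully nonlinear equation \eqref{newnew1} by the continuity method, in close analogy with Yau's proof of the Calabi conjecture \cite{Yau} for the complex Monge--Amp\`ere equation. First I would interpolate: for $t\in[0,1]$ put $F_t = tF + c_t$, with the constant $c_t$ fixed by $\int_M e^{F_t}\omega^3 = \int_M\omega^3$, and consider
\[
(\rho + dd^s(\varphi_t\sigma))\wedge(\sigma - dd^s(\varphi_t\rho)) = e^{F_t}\rho\wedge\sigma,\qquad \int_M \varphi_t\,\omega^3 = 0.
\]
At $t=0$ one has $F_0=0$ and the solution $\varphi_0=0$, so the set $\mathcal{T}$ of parameters admitting a solution contains $0$. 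Note that adding a constant to $\varphi$ leaves the equation unchanged: since $\Omega$ is closed, $d\sigma = d\rho = 0$, whence $dd^s(c\,\sigma) = -d^s d(c\,\sigma)=0$ and likewise for $\rho$; this translation invariance is exactly what the normalization (3) compensates. It then suffices to show that $\mathcal{T}$ is open and closed in $[0,1]$.

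For openness I would linearize the equivalent equation \eqref{newnew2}. Writing $\Omega' = \Omega - \sqrt{-1}\,dd^s(\varphi\Omega)$ and varying $\varphi\mapsto\varphi+su$, the linearized operator is
\[
Lu = -\sqrt{-1}\left(dd^s(u\Omega)\wedge\bar\Omega' + dd^s(u\bar\Omega)\wedge\Omega'\right),
\]
a second-order scalar operator once one divides by the volume $6$-form $\Omega\wedge\bar\Omega$. The positivity hypothesis (1), together with the stability of $\rho+dd^s(\varphi\sigma)$ and $\sigma-dd^s(\varphi\rho)$, should render the principal symbol of $L$ definite, so that $L$ is elliptic; on the compact $M$ it then has index zero and kernel equal to the constants, by the translation invariance just noted. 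Hence $L$ restricts to an isomorphism between the slices $\{\int_M u\,\omega^3=0\}$ of $C^{2,\alpha}$ and $C^{0,\alpha}$, and the implicit function theorem gives openness of $\mathcal{T}$.

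The substantial work lies in closedness, which demands a priori estimates uniform in $t$: a $C^0$ bound (maximum principle or Moser iteration applied to $\log$ of the ratio of top forms), then $C^1$, and then the crucial $C^2$ bound. \textbf{The main obstacle is this $C^2$ estimate}: it is the stable-form analogue of Yau's second-order estimate, and one must locate the right Bochner-type identity for $dd^s$ acting through $\rho$ and $\sigma$ and absorb the curvature terms of the Calabi--Yau background. A further delicate point is that the $6$-form product in \eqref{newnew1} is, up to the volume form, a homogeneous quadratic Hitchin-type invariant of the Hessian of $\varphi$, so establishing the concavity required to invoke the Evans--Krylov theorem is itself part of the difficulty. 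Granting a uniform $C^2$ bound together with this concavity, Evans--Krylov yields a uniform $C^{2,\alpha}$ estimate and Schauder bootstrapping gives estimates of every order, closing $\mathcal{T}$; alternatively one might try to extract these bounds from the existence theorem of \cite{egorov3} specialized to $\alpha=\varphi\sigma$, $\beta=-\varphi\rho$.

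Finally, uniqueness follows from the maximum principle. If $\varphi_1,\varphi_2$ both solve \eqref{newnew1}, their difference $w=\varphi_1-\varphi_2$ satisfies $\widetilde{L} w = 0$ for the elliptic operator $\widetilde{L}$ obtained by integrating the linearization along the segment joining $\varphi_1$ and $\varphi_2$; since $\widetilde{L}$ has no zeroth-order term, the strong maximum principle forces $w$ to be constant, and the normalization (3) forces that constant to be $0$.
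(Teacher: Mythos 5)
First, be aware that the statement you are proving is labelled a \emph{Conjecture} in the paper: the author gives no proof, remarks only that it would follow from Conjecture \ref{conj1}, and proposes the continuity method merely as a ``possible line of attack'' for the related existence question. So there is no proof in the paper to compare against, and your task was to supply one; what you have written is an outline of the same attack the author suggests, not a proof. Every genuinely hard step is deferred. Concretely: (a) you assert that hypothesis (1) ``should render the principal symbol of $L$ definite,'' but this is precisely the point that needs an argument --- positivity modulo $\rho$ and modulo $\sigma$ in the sense of Definition \ref{positive} is a condition on the values of the stable forms on decomposable $3$-vectors, and it is not evident that it controls the signature of the quadratic form in $D^2 u$ appearing in the linearization of \eqref{new_explicit}; (b) the $C^0$, $C^1$ and $C^2$ estimates are named but not even sketched, and the usual $C^0$ argument (Moser iteration on $\varphi\,((\omega+dd^c\varphi)^n-\omega^n)$) exploits the determinant/divergence structure of the complex Monge--Amp\`ere operator, which \eqref{newnew1} does not share; (c) the concavity needed for Evans--Krylov is genuinely in doubt, since the local operator \eqref{new_explicit} is an indefinite quadratic in the Hessian (a $\sigma_2$-type operator), concave at best on a G\aa rding-type cone, and you have not shown that condition (1) places $\varphi$ in that cone. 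You flag (b) and (c) yourself as ``the main obstacle,'' which is honest, but it means the proposal does not establish the statement.

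Two further points. Your uniqueness argument via $\widetilde{L}w=0$ requires ellipticity of the integrated linearization along the entire segment $s\mapsto\varphi_2+sw$; since the admissible set cut out by condition (1) is convex (the positivity conditions are affine in $\varphi$), this is plausible, but you must still prove ellipticity there, which returns you to gap (a). And your fallback of ``extracting the bounds from the existence theorem of \cite{egorov3} specialized to $\alpha=\varphi\sigma$, $\beta=-\varphi\rho$'' does not work as stated: that theorem produces \emph{some} $3$-forms $\alpha,\beta$ subject to stability, primitivity and duality constraints, and gives no mechanism for forcing them to be of the special potential form $\varphi\sigma$, $-\varphi\rho$ for a single scalar $\varphi$ --- that reduction is exactly the content of Conjecture \ref{conj1}, i.e.\ of the open problem itself.
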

Obviosly, this conjecture follows from the Conjecture \ref{conj1}.

Now let us write down the local form of the equation
\eqref{newnew1}. First, recall the local form of the
Monge--Amp\`{e}re equation.

Suppose $U$ is an open set of $M$ with co-ordinate chart $\{z^i\}$
such that $\Omega = dz^1\wedge dz^2\wedge dz^3$ on $U$; then the
global equation $(\omega + dd^c\varphi)^n = e^F\omega^n$ is locally
equivalent to the following equation on $U$
$$\det
\varphi_{i\bar{j}} = \mathrm{const},
$$ where  $\varphi_{i\bar{j}} =
\partial^2\varphi/\partial z^i\partial \bar{z}^j$.

Now suppose $U$ is an open set of $M$ with co-ordinate chart
$\{x^i\}$ such that $\omega = dx^{12}+dx^{34}+dx^{56}$ on $U$; then
the global equation \eqref{newnew1} is locally equivalent to the
following equation on $U$
\begin{equation}\label{new_explicit}
\begin{array}{l}
\quad(\varphi_{22}+\varphi_{33}+\varphi_{55})(\varphi_{11}+\varphi_{44}+\varphi_{66})\\
+\ (\varphi_{11}+\varphi_{44}+\varphi_{55})(\varphi_{22}+\varphi_{33}+\varphi_{66})\\
+\ (\varphi_{11}+\varphi_{33}+\varphi_{66})(\varphi_{22}+\varphi_{44}+\varphi_{55})\\
+\ (\varphi_{22}+\varphi_{44}+\varphi_{66})(\varphi_{11}+\varphi_{33}+\varphi_{55})\\
-\ (\varphi_{12}+\varphi_{34}+\varphi_{56})^2 -\
(-\varphi_{12}-\varphi_{34}+\varphi_{56})^2
-(\varphi_{12}-\varphi_{34}-\varphi_{56})^2\\
-\ (-\varphi_{12}+\varphi_{34}-\varphi_{56})^2
-2\left[(\varphi_{13}-\varphi_{24})^2
+(\varphi_{36}+\varphi_{45})^2+(\varphi_{15}-\varphi_{26})^2\right.\\
+\ \left.(\varphi_{16}+\varphi_{25})^2+(\varphi_{35}-\varphi_{46})^2
+(\varphi_{14}+\varphi_{23})^2\right] = \mathrm{const},
\end{array}\end{equation}
where  $\varphi_{ij} =
\partial^2\varphi/\partial x^i\partial x^j$.

To investigate equation \eqref{new_explicit} consider the case when
the Calabi--Yau manifold is fibered by flat special Lagrangian tori
\cite{syz} and perform analysis similar to \cite{Leung}. Suppose
manifold is semi-flat and the mirror K\"{a}hler potential depends on
the odd co-ordinates only. Then equation \eqref{new_explicit} takes
the following form.
\begin{equation}\label{new_explicit_semi}
\begin{array}{l}
\quad\varphi_{11}\varphi_{33}+\varphi_{11}\varphi_{55} +
\varphi_{33}\varphi_{55} -\varphi_{13}^2
-\varphi_{15}^2-\varphi_{35}^2  = \mathrm{const},
\end{array}\end{equation}
where  $\varphi_{ij} =
\partial^2\varphi/\partial x^i\partial x^j$.
Obviously, the equation \eqref{new_explicit_semi} is the sum of
three real Monge--Amp\`{e}re equations. It is known that for any
solution of the real Monge--Amp\`{e}re equation one can produce a
new solution by performing the Legendre transformation. Therefore,
by performing the partial Legendre transformation on the mirror
K\"{a}hler potential, we obtain new solutions of
\eqref{new_explicit} just as in the classical case. Note that the
partial Legendre transfomation of a plurisubharmonic function is
again plurisubharmonic \cite{Hormander}.

Conjecturally,  equation \eqref{new_explicit} is also related to the
some type of the Monge--Amp\`{e}re equations \cite{lychagin}. This
is an open question.

\section{Conclusion}
In conclusion we state some open problems.

1) It is natural to consider higher dimensional cases, though the
holomorphic volume form is not stable if $n>3$. The uniqueness of
the $n=3$ case is that a single real $3$-form determines complex
structure. In higher dimensions one needs a pair of real $n$-forms
to determine a complex structure.

2)  What is a relation between the classical and mirror K\"{a}hler
potentials? What is a relation between the complex Monge--Amp\`{e}re
equation and the new equation?


\begin{thebibliography}{99}



\bibitem{Hitchin3f} N.J.Hitchin,
The geometry of three-forms in six dimensions, J. Differential
Geometry 55 (2000), 547--576.


\bibitem{Hitchin_stable} N.J.Hitchin, Stable forms and special metrics, in
"Global Differential Geometry: The Mathematical Legacy of Alfred
Gray", M. Fern\'{a}ndez and J. A. Wolf (eds.), Contemporary
Mathematics 288, American Mathematical Society, Providence (2001).



\bibitem{egorov3}D. Egorov, New equation on the low-dimensional
Calabi--Yau metrics, Siberian Math. J. to appear. arXiv:1104.5575
(2011).


\bibitem{Calabi} E. Calabi, On K\"{a}hler manifolds with vanishing canonical class,
Algebraic geometry and topology. A symposium in honor of S.
Lefshetz, pp. 78--89. Princeton University Press, Princeton, N.J.,
1957

\bibitem{Yau}S.-T. Yau, On the Ricci curvature of compact K\"{a}hler manifold and the
complex Monge--Amp\`{e}re equation I,  Comm. on pure and appl. math.
V. 31, 339--411, 1978



\bibitem{Hull}Chris M. Hull, Ulf Lindstrom, Martin Rocek, Rikard von Unge, Maxim Zabzine,
Generalized Calabi-Yau metric and Generalized Monge-Ampere equation, JHEP 1008:060, 2010

\bibitem{Tseng}Li-Sheng Tseng and S.-T. Yau , Cohomology and Hodge theory on
symplectic manifolds: I, arXiv:0909.5418.






\bibitem{HL}F. Reese Harvey and
H. Blaine Lawson Jr.,  An introduction to potential theory in
calibrated geometry,  American Journal of Mathematics, Volume 131,
Number 4, August 2009, pp. 893-944


\bibitem{Verbit} M. Verbitsky, Manifolds with parallel
differential forms and K\"{a}hler identities for $G_2$-manifolds,
arXiv:math.dg/0502540 (2005).

\bibitem{Hormander_convex}L. H\"{o}rmander, Notions of convexity,
Birkh\"{a}user, 1994.


\bibitem{syz}A. Strominger, S.-T. Yau and E. Zaslow,
Mirror symmetry is T-duality. Nuclear Phys. B 479(1996), no. 1--2,
243–-259.

\bibitem{Leung} N. C. Leung, Mirror symmetry without corrections.
Comm. Anal. Geom. 13(2005), no. 2, 287–-331.

\bibitem{Hormander}L. H\"{o}rmander, On the Legendre and Laplace
transformations. Ann. della Scuola Norm. Sup. di Pisa, Classe di
Scienze $4$e serie, 25(1997), no. 3--4, 517--568.


\bibitem{lychagin}V.V. Lychagin, V.N. Rubtsov, I.V. Chekalov,
A classification of Monge--Amp\`{e}re equations, Annales
scientifiques de l'Ecole normale sup\'{e}rieure 1993, vol. 26, no3,
pp. 281--308

\end{thebibliography}
\end{document}